\newtheorem{thm}{Theorem}
\newtheorem{cor}[thm]{Corollary}
\newtheorem{lem}[thm]{Lemma}
\theoremstyle{definition}
\theoremstyle{remark}
\newcommand{\dd}{\mathinner{\ldotp\ldotp}}
\begin{document}

\title{The total run length of a word}

\author{Amy Glen}%
\address{Department of Mathematics and Statistics \newline
\indent Murdoch University
\newline
\indent  90 South Street
\newline
\indent Murdoch, Western Australia 6150\newline\indent Australia}%
\email{\href{mailto:amy.glen@gmail.com}{amy.glen@gmail.com}}

\author{Jamie Simpson}%
\address{Department of Mathematics and Statistics \newline
\indent Curtin University of Technology\newline \indent GPO Box
U1987 \newline
\indent Perth, Western Australia 6845\newline\indent Australia}%
\email{\href{mailto:simpson@maths.curtin.edu.au}{simpson@maths.curtin.edu.au}}

\begin{abstract} A \emph{run} in a word is a periodic factor whose length is
at least twice its period and which cannot be extended to the left
or right (by a letter) to a factor with greater period.  In recent years a great
deal of work has been done on estimating the maximum number of runs
that can occur in a word of length $n$. A number of associated
problems have also been investigated.  In this paper we consider a
new variation on the theme.  We say that the \emph{total run length} (TRL)
of a word is the sum of the lengths of the runs in the word and that
$\tau(n)$ is the maximum TRL over all words of length $n$.  We
show that $n^2/8 < \tau(n)
<47n^2/72+2n$ for all $n$. We also give a formula for the average total run
length of words of length $n$ over an alphabet of size $\alpha$, and
some other results.
\end{abstract}

\maketitle

\section{Introduction}
 We use notation for combinatorics on words. A word of $n$ elements is $x=x[1 \dd n]$, with $x[i]$ being
the $i$th element and $x[i\dd j]$ the \emph{factor} of elements from
position $i$ to position $j$.  If $i=1$ then the factor is a
\emph{prefix} and if $j=n$ it is a \emph{suffix}. The letters in $x$
come from some \emph{alphabet} $A$.  The \emph{length} of $x$,
written $|x|$, is the number of letters $x$ contains and the number
of occurrences of a letter $a$ in $x$ is denoted by $|x|_a$. Two or
more adjacent identical factors form a so-called \emph{power}. A
word which is not a power is said to be \emph{primitive}. A word $x$
or factor $x$ is \emph{periodic} with period $p$ if $x[i]=x[i+p]$
for all $i$ such that $x[i]$ and $x[i+p]$ are in $x$. A periodic
word with least period $p$ and length $n$ is said to have
\emph{exponent} $n/p$. For example, the word $ababa$ has exponent
$5/2$ and can be written as $(ab)^{5/2}$.
 If $x=x[1\dd n]$ then the
\emph{reverse} of $x$, written $R(x)$, is $x[n]x[n-1]\cdots x[1].$ A
word that equals its own reverse is called a \emph{palindrome}.

In this paper we are concerned with runs. A \emph{run} (or
\emph{maximal periodicity}) in a word $x$ is a factor $x[i\dd j]$
having minimum period $p$, length at least $2p$ and such that
neither $x[i-1\dd j]$ nor $x[i\dd j+1]$ is a factor with period $p$.
Runs are important because of their applications in data compression
and computational biology (see, for example, \cite{KK}). In recent
years a number of papers have appeared concerning the function
$\rho(n)$ which is the maximum number of runs that can occur in a
word of length $n$. In 2000 Kolpakov and Kucherov \cite{KK} showed
that $\rho(n)=O(n)$ but their method did not give any information
about the size of the implied constant. They conjectured that
$\rho(n)<n$ for all $n$ which has become known as the Runs
Conjecture.  In \cite{R2006} Rytter showed that $\rho(n)<5n$. This
bound was improved progressively in \cite{PSS} and \cite{CI2008} and
most recently by Crochemore, Ilie and Tinta \cite{CIT} to $1.029n$.
Their method is difficult and heavily computational. Giraud~\cite{G}
has produced weaker results using a much simpler technique. He also
showed that $\lim_{n\rightarrow \infty}\rho(n)/n$ exists. In the
other direction Franek \textit{et al.}~\cite{FSS} showed that this limit is
greater than 0.927, a result that was improved by Kusano \textit{et al.}~\cite{KMIBS} and Simpson~\cite{S} to 0.944.  We therefore have

\begin{equation*}
0.944 < \lim_{n\rightarrow \infty}\rho(n)/n <1.029.
\end{equation*}

These investigations have prompted authors to investigate a number
of associated problems.  Baturo and coauthors looked at runs in
Sturmian words \cite{BPR}. Puglisi and Simpson \cite{PS} gave
formulas for the expected number of runs in a word of length $n$.
This depends on the alphabet size, with binary alphabets giving the
highest values. Kusano and Shinohara \cite{KS} obtained similar
results for necklaces (words with their ends joined). Crochemore
\cite{CKRRW} and others have investigated runs whose length is at
least three times the period. Rather than counting the number of
runs one can consider the sum of the exponents of the runs. The word
$ababaabaa$ has runs  $(ab)^{5/2}$, $(aba)^{7/3}$ and two copies of
$a^2$, so it contains 4 runs with sum of exponents $53/6$. Let
$\epsilon(n)$ be the maximum sum of the exponents of runs in a word
of length $n$. It is known \cite{CKRRW} that for large $n$
\begin{equation*}
2.035n < \epsilon(n) <4.1n.
\end{equation*}

In this paper we introduce a new variation on this theme.  The
\emph{total run length} of a word is the sum of the lengths of the
runs in the word. The word given above contains runs $aa$ (twice),
$ababa$ and $abaabaa$ of lengths $2$, $5$, and $7$ so its total run
length is $16$. We write $TRL(w)$ for the total run length of a word
$w$ and $\tau(n)$ for $\max\{TRL(w):|w|=n\}$. In the next section we
give some minor results about \textit{total run length} (TRL for
short) and obtain a lower bound on $\tau(n)$.  An upper bound is
given in Section~\ref{S:Sec3} and formula for the expected TRL in
Section~\ref{S:Sec4}.  In the final section we discuss our results
and suggest some areas for further research.

\section{A lower bound on $\tau(n)$}

Table~1 (below) gives values of $\tau(n)$ for small
$n$ (under the assumption that these values are attained by binary
words) and examples of words that attain these values.

\begin{table}[htb!] \label{Table 1}
\begin{center}
\caption{Values of $\tau(n)$ assuming these values are attained by binary
words.} \label{tau}
    \begin{tabular}{llll}
      $n$ & $\tau(n)$ & $\tau(n)/n^2$ & Example\\
     \hline
        1 & 0 & 0 &$a$\\
2 & 2 & 0.5 & $aa$\\
3 & 3 & 0.333 & $aaa$\\
4 & 4 & 0.250 & $aaaa$\\
5 & 6 & 0.240 & $aabab$\\
6 & 10 & 0.278 & $aabaab$\\
7 & 12 & 0.245 & $aabaabb$\\
8 & 16 & 0.250 & $aabbaabb$\\
9 & 19 & 0.235 & $abaaabaab$\\
10 & 29 & 0.290 & $aababaabab$\\
11 & 32 & 0.264 & $abaababaaba$\\
12 & 37 & 0.257 & $abaababaabab$\\
13 & 42 & 0.249 & $ababbababbaba$\\
14 & 47 & 0.240 & $aaabaabaaabaab$\\
15 & 53 & 0.236 & $abaabababaababa$\\
16 & 60 & 0.234 & $aabaababaabaabab$\\
17 & 70 & 0.242 & $ababaabababaababa$\\
18 & 73 & 0.225 & $aababaabababaababa$\\
19 & 80 & 0.222 & $abaababaabaababaaba$\\
20 & 85 & 0.212 & $abaababaabaababaabab$\\
21 & 92 & 0.209 & $ababaababababaabababa$\\
22 & 99 & 0.205 & $aababaababaaababaababa$
    \end{tabular}
  \end{center}
\end{table}

We do not know whether binary words are best, though this seems likely.  The
same uncertainty exists for $\rho(n)$ and is discussed in
\cite{BGKNSS}. Note that if a binary word is optimal with respect to TRL, then so is its reverse and its \emph{complement} (formed by interchanging the letters $a$ and $b$).  In most cases, the
binary words attaining the values in Table~1 are unique up to reversal and complementation. Also note that, in many cases, binary words having maximum TRL are palindromes.

If we use an alphabet of size greater than $2$ we can construct words
of any length containing no runs (see Section 3.1.2 of \cite{Lo2})
and so having TRL equal to 0. For binary words the minimum values of
$TRL(w)$ for $w$ of length up to 5 are 0,0,0,2,2. For larger values
of $n$ we have the following.

\begin{thm} The minimum value of $TRL(w)$ for binary words of length $n$,
$n \ge 6$, is $n-4$, and is attained by the word $aba^{n-4}ba$.
\end{thm}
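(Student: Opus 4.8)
The plan is to prove this in two directions: first show that every binary word of length $n \ge 6$ has $TRL(w) \ge n-4$, and second exhibit the word $aba^{n-4}ba$ and verify that it attains exactly $n-4$. The second direction is the natural starting point because it is concrete. In $aba^{n-4}ba$ the only repetition of substantial length is the internal block $a^{n-4}$, which is a run of length $n-4$ with period $1$ (assuming $n-4 \ge 2$, i.e. $n \ge 6$, so the block genuinely has exponent at least $2$). I would check that no other run occurs: the two isolated $b$'s break up any potential longer periodicity, and the factors $ab$, $ba$ at the ends have length too short to be runs. The flanking $a$'s adjacent to each $b$ do not create additional runs because an $a$ sitting next to the long block is already absorbed into $a^{n-4}$, and the single $a$'s at positions $1$ and $n$ are isolated by the neighbouring $b$. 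Hence $TRL(aba^{n-4}ba) = n-4$ exactly.

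The harder and more interesting direction is the lower bound: no binary word of length $n \ge 6$ can do better (smaller) than $n-4$. Here my strategy would be to argue that a binary word cannot avoid having a long ``nearly unbroken'' stretch of runs, by a counting or pigeonhole argument on the letters. The key observation is that over a binary alphabet, runs are essentially unavoidable: any sufficiently long binary word must contain squares (for instance, among any three consecutive letters over $\{a,b\}$ two must be equal, giving a square $aa$ or $bb$, which is a run of length at least $2$). So the real content is to control how the lengths of these forced runs add up and to show the minimum is $n-4$ rather than something smaller. I would try to establish that the four ``lost'' positions correspond to the two end letters and the two letters that serve as delimiters (the two $b$'s in the extremal word), and argue that any attempt to use fewer than four non-run positions forces extra runs elsewhere that more than compensate.

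Concretely, I would consider the positions of a word not covered by any run (call these the \emph{free} positions) and aim to show that a binary word of length $n$ with fewer than $4$ free positions must contain enough overlapping or additional run-length to push $TRL$ back up to at least $n-4$; conversely, a word achieving the minimum must localise its free positions to exactly these four boundary/delimiter roles. A clean way to organise this is by induction on $n$: assuming the bound for shorter words, analyse the run structure near the ends of $w$ and peel off a letter or a short suffix, relating $TRL(w)$ to $TRL$ of a shorter word while tracking the $+1$ contributed per added letter inside a run versus the rare free positions. The main obstacle I anticipate is the careful case analysis of how a run can be truncated, extended, or created when a boundary letter is removed — in particular ensuring that removing a letter does not merge two runs or split one in a way that changes $TRL$ unpredictably. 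Handling these boundary interactions rigorously, and confirming that the only way to realise exactly four free positions is the claimed palindromic form $aba^{n-4}ba$ (up to reversal and complementation), is where the real work lies.
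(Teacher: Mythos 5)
Your verification that $aba^{n-4}ba$ attains $TRL=n-4$ is essentially the paper's (the paper just says ``clearly''), though you should be careful at the boundary case: for $n=6$ the word $abaaba$ is the square $(aba)^2$, a run of length $6$, so ``the two isolated $b$'s break up any potential longer periodicity'' is not automatic and your check only goes through for $n\ge 7$. The real problem, however, is the lower bound, and there your text is a plan rather than a proof. You correctly reduce the question to counting \emph{free} positions (positions in no run), via $TRL(w)\ge n-(\text{number of free positions})$, so that it suffices to show every binary word of length $n\ge 6$ has at most $4$ free positions. But you never establish this: the proposed pigeonhole/counting argument is only sketched, the induction on $n$ is left with the boundary case analysis explicitly unresolved (``where the real work lies''), and the uniqueness discussion of the extremal word is not needed for the statement and does not substitute for the bound.

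The missing idea is a short local argument. Suppose $TRL(w)\le n-5$, so there are at least $5$ free positions; take the \emph{middle} one of five such positions, say it carries the letter $a$. Its neighbours must both be $b$ (a neighbouring $a$ would put it inside a square $aa$, hence inside a run), so, using the two further free positions on each side to guarantee the word extends far enough, this $a$ is the central $a$ of a factor $ab^{k_1}ab^{k_2}a$. If $k_1\le k_2$ this factor begins with the square $ab^{k_1}ab^{k_1}$, which contains the central $a$ and lies in some run, contradicting freeness; the case $k_1>k_2$ is symmetric. This is the entire content of the paper's lower-bound proof, and nothing in your proposal produces the square around a free letter; without it the bound $n-4$ does not follow from what you have written.
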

\begin{proof} Clearly the given word has TRL equal to $n-4$.  We
must show no binary word of length $n$ has a lower TRL.  Suppose
the word $w$ does.  Then it has $TRL(w) \le n-5$ and therefore there
are at least 5 letters in $w$ which do not belong to any run.
Consider the middle of these 5, and without loss of generality
suppose it's $a$. Its neighbours cannot equal $a$ as then it would
belong to a run. Therefore it is the central $a$ of some factor
$ab^{k_1}ab^{k_2}a$. If $k_1 \le k_2$ then we have a preficial run
$ab^{k_1}ab^{k_1}$ so the central $a$ does belong to a run,
contradicting the hypothesis.  If $k_1 > k_2$ an analogous argument
applies.

\end{proof}

\begin{thm} \label{lowertau} For $n>1$ we have $\tau(n) > n^2/8$.\end{thm}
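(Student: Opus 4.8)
The plan is to construct an explicit family of words whose total run length exceeds $n^2/8$, since lower bounds on $\tau(n)$ come from exhibiting good words. Looking at Table~1, the examples achieving large TRL relative to $n^2$ tend to be built from short periodic blocks like $aab$ or $ab$ repeated, so the natural strategy is to find a periodic word with small period whose runs are long and numerous.

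Let me think about what kind of word maximizes total run length. A word of the form $w = u^k$ where $u$ is primitive of length $p$ has one long run of length $kp = n$, contributing only $n$ to the TRL — that is linear, not quadratic. So a single run is useless. To get a quadratic TRL I want *many* long runs overlapping. The key observation is that if I take a word like $a^m b a^m b \cdots$, each maximal block of $a$'s of length $m$ is a run of length $m$, and there are roughly $n/(m+1)$ such blocks, giving TRL $\approx mn/(m+1) = \Theta(n)$ — still linear. The quadratic behaviour must come from runs whose *lengths* scale with $n$, and there must be $\Theta(n)$ of them. This forces me toward words with very small period (period $2$ or $3$) where many distinct runs of differing small periods all span large portions of the word.

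The approach I would take is to consider a word that is a high power of a short primitive word, say $w = (aab)^k$ or $w = (aabab)^k$ of length $n$, and carefully enumerate its runs. In such a word there is the global run of period $3$ (or $5$) and length $n$, contributing $n$, but more importantly there are many runs of small period inside whose lengths are each $\Theta(n)$: for instance every occurrence of $aa$ extends to a run $a^2$ of length $2$ — too short — but longer periodic factors straddling block boundaries can each have length proportional to $n$. I would pick the periodic structure so that there are $\Theta(\sqrt{n})$ or $\Theta(n)$ runs each of length $\Theta(n)$ or $\Theta(\sqrt{n}\cdot n)$ respectively; the target $n^2/8$ suggests the optimal trade-off gives a constant times $n^2$. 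Concretely, I suspect the right construction is $w = u^k$ with $u$ a short word chosen so that $w$ contains runs of each period $p = 1, 2, \dots, \Theta(\sqrt{n})$, each of length roughly $n$, summing to $\Theta(\sqrt{n}) \cdot n = \Theta(n^{3/2})$ — but that is only $n^{3/2}$, below $n^2$. So instead the construction must have $\Theta(n)$ runs each of length $\Theta(n)$, which means a word with a self-similar fractal structure, likely built by concatenating copies of a pattern that introduces a fresh $\Theta(n)$-length run at each of $\Theta(n)$ positions.

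On reflection, the cleanest route is probably to take $w = (ab)^{n/2}$-like words and exploit that every even-length prefix and suffix, together with internal factors, are squares; but I must avoid double-counting and respect the maximality condition in the definition of a run. The main obstacle will be the bookkeeping: verifying that each purported run genuinely has the claimed minimum period (not a smaller one) and is genuinely maximal (cannot be extended left or right with the same period), since the definition of a run is delicate on both counts. I would structure the proof by (i) defining the word explicitly, ideally as a power of a short primitive block or a Fibonacci-like construction, (ii) identifying a specific collection of $\Theta(n)$ runs, (iii) proving each element of that collection is indeed a run by checking minimum period and both-sided maximality, and (iv) summing their lengths to get a value strictly exceeding $n^2/8$. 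The delicate step — and the one where an incautious argument fails — is step (iii), because overlapping periodic factors can collapse to a smaller period or fuse into a single longer run, so the lower-bound count could be an overcount if done naively. I would therefore choose the construction specifically so that the periods of the chosen runs are pairwise distinct or otherwise manifestly minimal, making the maximality and minimum-period checks as mechanical as possible.
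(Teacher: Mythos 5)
Your proposal is a strategy outline rather than a proof: it never commits to a specific word, never enumerates its runs, and never computes a total run length, so the bound $n^2/8$ is never actually established. That is the gap. Moreover, the heuristic that drives your search --- that a quadratic TRL ``forces'' a word with $\Theta(n)$ runs \emph{each} of length $\Theta(n)$, or else some fractal/Fibonacci-like construction --- is misleading. The paper's construction is the very simple word $u(k)=((ab)^k a)^2$ of length $n=4k+2$, essentially one of the candidates you considered and drifted away from. Its runs are: the factor $aa$ at the centre (length $2$), two period-$2$ runs $(ab)^k a$ of length $2k+1$ each, and, crucially, one run of period $2j+1$ for each $j=1,\dots,k$, namely the maximal extension of $((ab)^j a)^2$ straddling the centre, of length $2(2j+1)$. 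So the quadratic growth comes from $\Theta(n)$ runs whose lengths form an arithmetic progression from $O(1)$ up to $n$ --- summing to $2k^2+8k+4>n^2/8$ --- not from $\Theta(n)$ runs all of length $\Theta(n)$. The odd-$n$ case then follows from the trivial monotonicity $\tau(n)>\tau(n-1)$ (append a fresh letter), a step your outline does not address.

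You are right that the delicate point in any such argument is verifying minimal period and two-sided maximality for each claimed run, and in the paper's construction this is routine precisely because the chosen runs have pairwise distinct periods $1,2,3,5,7,\dots,2k+1$ and each is blocked from extension by an explicit letter mismatch at its boundary. But identifying where the difficulty lies is not the same as resolving it: to turn your proposal into a proof you must fix a concrete family of words, list its runs, check maximality, and sum the lengths --- none of which is present.
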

\begin{proof} From Table 1 we see this holds up to $n=5$. For even $n$
greater than 5 let $u(k)=((ab)^ka)^2$. We find
\begin{eqnarray*}
&&n=|u(k)|=4k+2
\end{eqnarray*}
and
\begin{eqnarray*}
TRL(u(k))&=& 2k^2+8k+4\\
&=& (n^2+4n+12)/8
\end{eqnarray*} so $\tau(n) \ge (n^2+4n+12)/8$ and the theorem holds for even $n$. For odd $n$ note
that $\tau(n)>\tau(n-1)\ge (n^2+2n+9)/8$ so the bound also holds in this case too.

\end{proof}

\section{An upper bound for $\tau(n)$} \label{S:Sec3}

We first assemble some lemmas.

\begin{lem}\cite{FW} \label{lem 4.1} \textup{(The Periodicity
Lemma)} If $x$ is a word having two periods $p$ and $q$ and $|x| \ge
p+q-\gcd(p,q)$ then $x$ also has period $\gcd(p,q)$.\end{lem}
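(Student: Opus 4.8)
The plan is to prove this by a Euclidean descent on the pair of periods, reducing the general statement to the coprime case. Write $d=\gcd(p,q)$ and $n=|x|$, and assume without loss of generality that $p\ge q\ge 1$. The whole argument is driven by the elementary observation that each hypothesis gives an equality between letters at a bounded distance: the period $p$ forces $x[i]=x[i+p]$ whenever $1\le i$ and $i+p\le n$, and similarly the period $q$ forces $x[i]=x[i+q]$. I would organise these equalities as a graph on the positions $\{1,\dots,n\}$, joining $i$ to $i\pm p$ and to $i\pm q$ whenever both endpoints lie in range; positions in the same connected component necessarily carry the same letter. Since every edge joins positions differing by a multiple of $d$, the goal becomes to show that, under the hypothesis $n\ge p+q-d$, two positions are joined \emph{exactly} when they are congruent modulo $d$. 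The ``only if'' direction is immediate, so the content is the connectivity within each residue class.

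The core reduction is the claim that, under the same hypothesis, $x$ also has period $p-q$. Granting this, I would induct on $p+q$: the pair $(q,p-q)$ has the same gcd $d$, since $\gcd(q,p-q)=\gcd(q,p)=d$; its associated length threshold is $q+(p-q)-d=p-d\le p+q-d\le n$; and $q+(p-q)=p<p+q$, so the inductive hypothesis applies and yields period $d$. The base case $p=q$ is trivial, since then $d=p$ and $x$ already has period $d$. Thus everything rests on the descent lemma: periods $p>q$ together with $n\ge p+q-d$ imply period $p-q$.

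To prove the descent lemma I would verify $x[i]=x[i+(p-q)]$ for each admissible index $i$ (that is, $1\le i\le n-(p-q)$) by chaining the two basic equalities while keeping every intermediate position inside $[1,n]$. For indices with $i+p\le n$ this is painless: $x[i]=x[i+p]=x[i+p-q]$, using period $p$ and then period $q$, and one checks that both steps stay in range. The delicate indices are those near the right end, where $i+p>n$, which forces a detour through period-$q$ steps to reach a position from which a period-$p$ step is again available.

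I expect these boundary indices to be the main obstacle, and they are precisely where the bound is tight. When $i+p>n$ one would like to replace $i$ by $i-q$, which does admit a period-$p$ step, but $i-q$ stays positive only when $n\ge p+q$; with merely $n\ge p+q-d$ the slack of $d$ positions is exactly exhausted at the bottom of this range, so for the few remaining positions a single step no longer suffices and one must build a longer chain of $\pm p,\pm q$ moves --- equivalently, realise a B\'ezout combination $ap+bq=d$ by in-range moves. Handling this small, tight set of positions, and confirming that $n\ge p+q-d$ gives exactly enough room to connect them, is where the real work and the sharpness of the constant lie. (An alternative is to argue connectivity directly in the position graph, bypassing the explicit descent; the same boundary difficulty then reappears as the need to show that each residue class modulo $d$ forms a single component.)
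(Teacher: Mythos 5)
The paper does not prove this lemma --- it is quoted from Fine and Wilf \cite{FW} --- so your attempt can only be measured against the standard argument, and measured that way it has a genuine gap: the proof is not finished. Your induction skeleton is sound (the pair $(q,p-q)$ has the same gcd, satisfies the length threshold $p-d\le n$, and has smaller sum), but everything rests on the descent claim that $x$ \emph{itself} has period $p-q$, and for the boundary indices $i$ with $i+p>n$ and $i\le q$ you explicitly defer the argument (``this is where the real work and the sharpness of the constant lie''). That deferred set of at most $d$ positions is not a routine loose end: connecting those positions by in-range $\pm p,\pm q$ moves is exactly the connectivity statement that constitutes the whole content of the Periodicity Lemma, so as written the plan is circular at its hardest point rather than merely incomplete at an easy one.

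The standard fix is to weaken your descent claim so that the boundary case never arises: instead of asserting that all of $x$ has period $p-q$, assert it only for the prefix (or suffix) of length $n-q$ --- this is precisely Lemma~\ref{LA} of the paper, and its proof is your ``painless'' chain $x[i]=x[i+p]=x[i+p-q]$, which is always in range because an admissible index for period $p-q$ of that shorter prefix satisfies $i+(p-q)\le n-q$, i.e.\ $i+p\le n$. The induction then applies to that prefix of length $n-q\ge p-d=(p-q)+q-\gcd(p-q,q)$ and gives it period $d$; finally you propagate period $d$ back to all of $x$ using the period $q$ of the full word together with $d\mid q$ (every position of $x$ is linked by period-$q$ steps to a position among the first $q\le n-q$ ones, where period $d$ already holds). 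Your version keeps the whole word throughout and thereby manufactures the tight boundary range that you could not handle; restricting the descent to the length-$(n-q)$ prefix removes it entirely.
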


\begin{lem} \label{LA} \textup{(Lemma 8.1.1 of \cite{Lo2})} Let $\mathbf{a}$ be a word having two periods $p$ and
$q$ with $q<p$. Then the suffix and prefix  of length
$|\mathbf{a}|-q$ both have period $p-q$.\end{lem}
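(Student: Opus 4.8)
The plan is to argue directly from the definition of period, using the two given periodicities in tandem to manufacture the smaller period $p-q$ on the restricted factors. Write $m=|\mathbf{a}|$ and $\mathbf{a}=\mathbf{a}[1\dd m]$. Having period $p$ means $\mathbf{a}[i]=\mathbf{a}[i+p]$ for $1\le i\le m-p$, while having period $q$ means $\mathbf{a}[i]=\mathbf{a}[i+q]$ for $1\le i\le m-q$. The prefix in question is $\mathbf{a}[1\dd m-q]$, so to show it has period $p-q$ I must verify $\mathbf{a}[i]=\mathbf{a}[i+(p-q)]$ for every $i$ with $1\le i\le (m-q)-(p-q)=m-p$.

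First I would handle the prefix. Fix $i$ with $1\le i\le m-p$. Applying the period $p$ gives $\mathbf{a}[i]=\mathbf{a}[i+p]$, which is legitimate precisely because $i\le m-p$. Then applying the period $q$ to the positions $i+p-q$ and $i+p$ gives $\mathbf{a}[i+p-q]=\mathbf{a}[i+p]$; here one checks that $1\le i+p-q\le m-q$, the lower bound holding since $p>q$ and the upper bound since $i\le m-p$. Chaining these two equalities yields $\mathbf{a}[i]=\mathbf{a}[i+(p-q)]$, which is exactly the asserted periodicity of $\mathbf{a}[1\dd m-q]$.

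For the suffix I would invoke reversal symmetry rather than repeat the computation. The reverse $R(\mathbf{a})$ again has periods $p$ and $q$, so by the prefix case its prefix of length $m-q$ has period $p-q$; but that prefix is precisely the reverse of the suffix $\mathbf{a}[q+1\dd m]$, and reversing a factor preserves its periods, so $\mathbf{a}[q+1\dd m]$ inherits the period $p-q$ as well. The only delicate point in the whole argument is the index bookkeeping in the prefix step—confirming that both the intermediate position $i+p$ and the shifted position $i+p-q$ lie in the valid range. The reason the statement truncates to factors of length $m-q$ rather than treating all of $\mathbf{a}$ is exactly to keep these intermediate positions inside $[1,m]$, and this is where the hypothesis $q<p$ together with the choice of truncation length does its work.
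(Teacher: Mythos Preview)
Your argument is correct: the index bookkeeping for the prefix case is exactly right, and the reversal symmetry cleanly handles the suffix. Note, however, that the paper does not supply its own proof of this lemma; it simply quotes the result from Lothaire~\cite{Lo2} (Lemma~8.1.1) and moves on, so there is nothing in the paper to compare against beyond observing that your direct verification is the standard one.
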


\begin{lem} \label{LC}\textup{(Lemma 8.1.2 of \cite{Lo2})}
Let  $\mathbf{a}$, $\mathbf{b}$ and $\mathbf{c}$ be words such that
$\mathbf{ab}$ and $\mathbf{bc}$ have period $p$ and $|\mathbf{b}|
\ge p.$ Then the word $\mathbf{abc}$ has period $p$.
\end{lem}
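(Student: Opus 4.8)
The plan is to reduce the statement to a direct verification of the defining condition for period $p$: I must show that $\mathbf{abc}[i]=\mathbf{abc}[i+p]$ for every index $i$ such that both $i$ and $i+p$ are positions of $\mathbf{abc}$. First I would fix notation by writing $w=\mathbf{abc}$ and setting $\alpha=|\mathbf{a}|$ and $\beta=|\mathbf{b}|$, so that $\mathbf{ab}=w[1\dd\alpha+\beta]$ occupies the leftmost positions and $\mathbf{bc}=w[\alpha+1\dd|w|]$ occupies the rightmost positions, the two windows overlapping precisely in the shared copy of $\mathbf{b}$.

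The core of the argument is a case split on where the pair of positions $\{i,i+p\}$ lies. If $i+p\le\alpha+\beta$, then both $i$ and $i+p$ lie inside $\mathbf{ab}$, so the hypothesis that $\mathbf{ab}$ has period $p$ yields $w[i]=w[i+p]$. If instead $i\ge\alpha+1$, then since $i+p>i$ both positions lie inside $\mathbf{bc}$, and the hypothesis that $\mathbf{bc}$ has period $p$ again yields $w[i]=w[i+p]$.

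The key step, and the only place the hypothesis $|\mathbf{b}|\ge p$ is used, is to show that these two cases are exhaustive. A pair $\{i,i+p\}$ escapes both cases precisely when $i\le\alpha$ and simultaneously $i+p>\alpha+\beta$; subtracting these two inequalities forces $p>\beta=|\mathbf{b}|$, contradicting $|\mathbf{b}|\ge p$. Hence the potential gap between the two windows is empty, every admissible pair falls into one of the two cases, and $w=\mathbf{abc}$ has period $p$.

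I would expect this final counting step to be the main (indeed essentially the only) obstacle, though it is a very mild one: it amounts to the observation that when the shared factor $\mathbf{b}$ is at least as long as the period, the period-$p$ relations on $\mathbf{ab}$ and on $\mathbf{bc}$ overlap enough to be glued across the whole of $\mathbf{abc}$ with no unverified positions left over. The argument also handles the degenerate cases in which $\mathbf{a}$ or $\mathbf{c}$ is empty without modification. Notably, no appeal to the Periodicity Lemma or to Lemma~\ref{LA} is required; the result is purely a matter of index bookkeeping.
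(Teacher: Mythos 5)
Your proof is correct: the case split on whether the pair $\{i,i+p\}$ lies in $\mathbf{ab}$ or in $\mathbf{bc}$, together with the observation that $|\mathbf{b}|\ge p$ rules out any pair straddling both windows, is exactly the standard argument for this classical lemma. The paper itself gives no proof, citing Lemma~8.1.2 of \cite{Lo2} instead, and your index bookkeeping matches the proof found there.
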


\begin{lem} \label{L:pp+1}
If $w$ is  a word for which $w[1..2p]$ has period $p$ and
$w[k+1..k+2p+2]$ has period $p+1$, where $0 \le k \le p$ then $w$
has the form:
\begin{equation}\label{pp+1}
w=Xx^{p-k}Xx^{p-k+1}Xx
\end{equation}\label{e5} where $x$ is a letter and $|X|=k$.
\end{lem}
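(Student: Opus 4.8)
The plan is to prove the statement directly, by chasing the letter-equalities forced by the two given periods; neither the Periodicity Lemma nor Lemmas~\ref{LA}--\ref{LC} seem necessary. Throughout I read ``period $q$'' in the sense of the Introduction, namely $w[i]=w[i+q]$ whenever both positions lie in the factor in question. Since the factor $w[k+1..k+2p+2]$ must exist we have $|w|\ge 2p+k+2$, while the asserted form~\eqref{pp+1} has length exactly $2p+k+2$; so the claim is really a description of the prefix $w[1..2p+k+2]$, and I will pin down its letters one block at a time.

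The crux is to locate the long constant block in the middle. Set $x:=w[2p+1]$. For each $i$ with $k+1\le i\le p$ \emph{both} hypotheses apply to position $i$: the period $p$ (valid since $1\le i\le p$) gives $w[i]=w[i+p]$, and the period $p+1$ (valid since $k+1\le i\le k+p+1$) gives $w[i]=w[i+p+1]$. Hence $w[i+p]=w[i+p+1]$ for all such $i$, so the letters in positions $p+k+1,p+k+2,\dots,2p+1$ are mutually equal, i.e.\ $w[p+k+1..2p+1]=x^{p-k+1}$. (When $k=p$ the index range $k+1\le i\le p$ is empty, so the chain of equalities is vacuous and the block reduces to the single letter $w[2p+1]=x$; defining $x$ as $w[2p+1]$ from the start is precisely what lets this boundary case fit the same formula.)

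It remains to propagate this block outwards. Pulling it back by the period $p$: for $k+1\le i\le p$ we have $w[i]=w[i+p]$ with $i+p\in\{p+k+1,\dots,2p\}$, whence $w[i]=x$ and therefore $w[k+1..p]=x^{p-k}$. Setting $X:=w[1..k]$, the period $p$ applied to $1\le i\le k$ gives $w[p+1..p+k]=X$. Finally, since $w[p+k+1]=x$ we have $w[p+1..p+k+1]=Xx$, and applying the period $p+1$ to the positions $p+1\le i\le p+k+1$ (each of which lies in the admissible range $k+1\le i\le k+p+1$ because $p\ge k$) yields $w[2p+2..2p+k+2]=Xx$; that is, $w[2p+2..2p+k+1]=X$ and $w[2p+k+2]=x$. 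Concatenating the six determined blocks gives exactly $w=Xx^{p-k}Xx^{p-k+1}Xx$.

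I do not expect a genuine obstacle here: the argument is elementary index-chasing. The only real care needed is bookkeeping --- verifying for every invoked equality $w[i]=w[i\pm q]$ that both endpoints genuinely lie inside the relevant factor --- together with the two degenerate boundaries $k=0$ (where $X$ is empty and $w=x^{2p+2}$) and $k=p$ (where the block $x^{p-k}$ is empty and the central step of the second paragraph collapses). The single small trick is to fix $x:=w[2p+1]$ at the outset, since this makes one chain of equalities cover the whole range $0\le k\le p$ uniformly.
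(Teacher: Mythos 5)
Your proof is correct and follows essentially the same route as the paper's: both exploit the interaction of the periods $p$ and $p+1$ on their overlap to force a constant block, and then propagate $X$ and $x$ outward by the two periodicities. The only real difference is that you derive the constant block directly from $w[i+p]=w[i+p+1]$ instead of citing Lemma~\ref{LA}, and your choice of anchoring $x:=w[2p+1]$ handles the boundary case $k=p$ somewhat more cleanly than the paper's definition of $x$ via the (then empty) prefix $w[k+1..p]$.
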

\begin{proof} Note that $w[k+1..2p]$ has periods $p$ and $p+1$. By Lemma \ref{LA}
its prefix $w[k+1..p]$ (which is empty if $k=p$) has period $1$. Say
this is $x^{p-k}$. Then by the $p$ periodicity
$w[p+k+1..2p]=x^{p-k}$. By the $p+1$ periodicity
$w[k+2p+2]=w[p+k+1]=x$ and $w[2p+1]=w[p]=x$. Let $w[1..k]=X$ so that
$|X|=k$. Then by the $p$ periodicity $w[p+1..p+k]=X$ and then, by
the $p+1$ periodicity, $w[2p+2..2p+k+1]=X$. Assembling all this
gives \eqref{pp+1}.
\end{proof}

\noindent \textit{Remark.} It is clear that if $w[1..2p+2]$ has
period $p+1$ and $w[k+3..2p+2]$ has period $p$ then $w$ is the
reverse of the right hand side of \eqref{pp+1}.\\

\begin{thm} \label{simult} It is not possible for a letter to simultaneously belong to two
distinct runs with period $p$ and two distinct runs of period $p+1$.
\end{thm}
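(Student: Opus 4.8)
The plan is to argue by contradiction. Suppose some position $i$ of $w$ lies simultaneously in two distinct runs of period $p$ and two distinct runs of period $p+1$. I would first dispose of the degenerate case $p=1$: a period-$1$ run is a maximal block of a single repeated letter, so a position can lie in at most one such run, and the statement holds vacuously. Thus assume $p\ge 2$. Among the two period-$p$ runs through $i$, let $A$ be the one reaching furthest left and $B$ the one reaching furthest right, and likewise let $C,D$ be the left- and right-most period-$(p+1)$ runs through $i$. Two distinct runs of the same minimum period cannot overlap in a factor of length at least that period, for otherwise Lemma~\ref{LC} (applied to the common overlap as $\mathbf{b}$) would merge them into a single longer factor of that period, contradicting maximality. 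Hence $A$ and $B$ overlap in at most $p-1$ positions and $C,D$ in at most $p$ positions. Consequently $i$ lies within $p-1$ of the right end of $A$ and of the left end of $B$, and within $p$ of the right end of $C$ and the left end of $D$, so each of the four runs contributes a period-square (of length $2p$ or $2p+2$) that passes through $i$ and lies predominantly on one side of it.

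The heart of the argument is to feed these squares into Lemma~\ref{L:pp+1}. I would pair the period-$p$ square $S_A$ sitting at the right end of $A$ (which extends to the left of $i$) with the period-$(p+1)$ square $S_D$ sitting at the left end of $D$ (which extends to the right of $i$). Taking whichever square begins first as the ``outer'' block, the pair $(S_A,S_D)$ satisfies the hypotheses of Lemma~\ref{L:pp+1} when the period-$p$ square begins first, and those of the Remark following it when the period-$(p+1)$ square begins first, with a shift parameter $k\in\{0,1,\dots,p\}$. Either way the factor of $w$ straddling $i$ is forced into the rigid shape $Xx^{p-k}Xx^{p-k+1}Xx$ or its reverse. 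I would then repeat this with the complementary ``diagonal'' pair $(S_B,S_C)$, where now the period-$p$ square extends to the \emph{right} of $i$ and the period-$(p+1)$ square to the left, obtaining a second rigid description of the same neighbourhood of $i$ but with the period-$p$ structure on the opposite side.

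The contradiction should come from reconciling these two rigid descriptions, which must agree letter by letter on their common window around $i$. Having the period-$p$ structure forced to both sides of $i$ while the period-$(p+1)$ structure fills the same stretch produces a factor carrying both period $p$ and period $p+1$ whose length attains $2p=p+(p+1)-\gcd(p,p+1)$; by the Periodicity Lemma (Lemma~\ref{lem 4.1}) such a factor has period $\gcd(p,p+1)=1$, i.e.\ is a single repeated letter, which cannot sit inside a run of minimum period $p\ge 2$. I expect the genuine difficulty to be the bookkeeping in the last two steps rather than any single inequality: one must verify that for at least one admissible choice of the four squares the shift $k$ really lands in $[0,p]$, that the two rigid forms overlap in a window long enough to invoke Lemma~\ref{lem 4.1}, and that the extreme alignments $k=0$ and $k=p$ (and the smallest values of $p$) are handled separately. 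The case analysis on the relative left--right order of the four runs is where the main work lies.
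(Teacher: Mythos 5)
Your setup is essentially the paper's: extract from each of the four runs a square containing the common letter, pair a period-$p$ square with a period-$(p+1)$ square on each ``side'', invoke Lemma~\ref{L:pp+1} (or its reversed Remark) to get two rigid forms, and split into cases according to orientation. The gap is in the closing step. The contradiction you propose --- that the two rigid forms combine to give a factor with both periods $p$ and $p+1$ of length $2p=p+(p+1)-\gcd(p,p+1)$, whence period $1$ by Lemma~\ref{lem 4.1} --- is not forced by the configuration, and in fact the whole difficulty of the theorem is that every doubly periodic window falls just short of the Fine--Wilf threshold. Lemma~\ref{LC} caps the overlap of the two period-$p$ runs at $p-1$ and of the two period-$(p+1)$ runs at $p$, so no period extends across the union of the two squares of the same period; and in the Lemma~\ref{L:pp+1} configuration the overlap of a $p$-square with a $(p+1)$-square is $2p-k$, which reaches $2p$ only in the degenerate alignment $k=0$, where the word is already a power of a single letter. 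For $k\ge 1$ the Periodicity Lemma simply does not apply, so your mechanism yields nothing in the generic case.

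What you defer as ``bookkeeping'' is therefore the entire substance of the proof, and it does not reduce to Fine--Wilf. In the paper's Cases 1 and 2 (opposite orientations) the contradiction comes from two competing bounds on the length $d$ of the overlap of the two rigid forms: $d<p$ (resp.\ $d<p+1$) by Lemma~\ref{LC}, versus $d>|X|+|Y|$ (resp.\ $d>|X|+|Y|+4$) needed for the inner factors to meet; these force an inequality like $i>|Y|$ which makes $x=y$ and collapses everything to a power of one letter, so the four squares lie in a single run. In Case 3 (same orientation), which is the genuinely hard one, the paper first normalises by absorbing letters into $Y$ until the two underlined factors meet in exactly one position, then writes $X=x^aUx^b$, $Y=x^cVx^d$ with $U,V$ not beginning or ending in $x$, and derives $U=V$ together with $a+b+s-1=c+d+t$, contradicting $|U|+a+b+s=|V|+c+d+t$ by exactly one. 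Nothing in your sketch anticipates either the overlap-length inequalities or this off-by-one counting argument, so as it stands the proof does not close.
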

\begin{proof}
The proof is by contradiction. If there exists a counterexample to
the theorem then it has a prefix and a suffix  each of length $2p$
and each of which has a length $2p$ prefix with period $p$ and a
length $2p+2$ suffix with period $p+1$, or vice versa, and is such
that the four periodic factors have at least one letter in common.

Let $$ \alpha=xXx\underline{x^sXx^s}X$$ and
$$\beta=yYy\underline{y^tYy^t}Y$$ where
\begin{equation}\label{Case 3}
|X|+s=|Y|+t=p. \end{equation} By Lemma~\ref{L:pp+1} each of $\alpha$
and $\beta$ has a prefix of length $2p+2$ with period $p+1$ and a
suffix of length $2p$ and period $p$. The intersection of these two
squares is underlined. We write $R(\alpha)$ and $R(\beta)$ for the
reverses of $\alpha$ and $\beta$.  We consider four cases.\\

\noindent \textit{Case 1.} A word $w$ has prefix $\alpha$ and suffix
$R(\beta)$ with the underlined
factors having non-empty intersection.\\
\textit{Case 2.} A word $w$ has
prefix $R(\alpha)$ and suffix $\beta$ with the underlined factors having non-empty intersection.\\
\textit{Case 3.} A word $w$ has
prefix $\alpha$ and suffix $\beta$ with the underlined factors having non-empty intersection.\\
\textit{Case 4.} A word $w$ has
prefix $R(\alpha)$ and suffix $R(\beta)$ with the underlined factors having non-empty intersection.\\

If the statement of the Theorem is incorrect then a word belonging
to one of these cases must exist with the stated periods being
minimal and the four squares belonging to four different runs. We
will show that in each case this cannot occur.\\

\noindent \textit{Case 1.} We have
\begin{equation*}
\alpha=xXx\underline{x^sXx^s}X \quad \mbox{and} \quad  R(\beta)=Y\underline{y^tYy^t}yYy.
\end{equation*} Let $d$ be the length of the intersection of these
two words.  The intersection must have length less than $p$ else, by
Lemma \ref{LC}, the two period $p$ squares would belong to the same
run of period $p$. We must also have $d>|X|+|Y|$ else the underlined
factors would not intersect. Recall that $|x^sX|=|y^tY|=p$ so the
intersection is a suffix $x^iX$ of $x^sX$ and a prefix $Yy^j$ of
$Yy^t$. Thus
$$|x^i|+|X|>|X|+|Y|$$ so $i>|Y|$ which implies that $x=y$ and that of $X$ and $Y$ is a power of
$x$. It follows that $w$ is a power of $x$ and the four squares belong to a single run.\\

\noindent \textit{Case 2.} We have
\begin{equation*}
R(\alpha)=X\underline{x^sXx^s}xXx \quad \mbox{and} \quad \beta=yYy\underline{y^tYy^t}Y.
\end{equation*} Let $d$ be the length of the intersection of these
two words. Now $R(\alpha)$ and $\beta$ have, respectively, a suffix
with period $p+1$ and a prefix with period $p+1$. As in Case 1 we
must have $d<p+1$.  In order that the underlined factors intersect
we must also have $d>4+|X|+|Y|$.  The intersection is thus
$$x^iXx=yYy^j$$ where $i+1+|X|>|X|+|Y|+4$ implying $i>|Y|+3$. As in
Case 1 this implies that $x=y$ and that $X$ and $Y$ are powers of
$x$ as is the whole word $w$.\\

\noindent \textit{Case 3.} We have
\begin{equation*}
\alpha=xXx\underline{x^sXx^s}X \quad \mbox{and} \quad \beta=yYy\underline{y^tYy^t}Y.
\end{equation*}
This case is more complicated than the others.  Let us add another
$y$ to the right hand end of $\beta$. Set $\beta'=\beta y$ and
$Y'=Yy$. Then
$$\beta'=yY'y\underline{y^{t-1}Y'y^{t-1}}Y'.$$ This has the same form as
$\beta$ but its underlined factor is one letter shorter and begins
one position further to the right.  Suppose we have a word with
prefix $\alpha$ and suffix $\beta$ in which the underlined factors
intersect. By iterating the construction just described we can
arrange that the two underlined factors have an intersection of
length one. This will be the final $x$ in the underlined factor of
$\alpha$ and the initial $y$ in the underlined factor of $\beta$.
Thus $x=y$. We suppose, without loss of generality, that this is the
case with our word.

$X$ and $Y$ may have prefixes or suffixes which are powers of $x$.
We set
\begin{eqnarray*}
X=x^aUx^b\\
Y=x^cVx^d
\end{eqnarray*}
for non-negative integers $a$, $b$, $c$ and $d$, where $U$ and $V$
neither begin nor end with $x$. Equations (\ref{Case 3}) now become
\begin{equation}\ \label{e7}
|U|+a+b+s=|V|+c+d+t=p.
\end{equation} and we have
\begin{eqnarray*}
&&\alpha=x^{a+1}Ux^{b+1}\underline{x^{a+s}Ux^{b+s}}\;x^aUx^b\\
&&\beta=x^{c+1}Vx^{d+1}\underline{x^{c+t}Vx^{d+t}}\;x^cVx^d.
\end{eqnarray*} By our assumption the last $x$ in the underlined
section of $\alpha$ coincides with the first $x$ in the underlined
section of $\beta$. This means that $x^{c+1}Vx^{d+1}$ is a suffix of
$x^{a+1}Ux^{b+1}\underline{x^{a+s}Ux^{b+s-1}}$ so that $U=V$ and
$b+s-1=d+1$. It also means that $x^aUx^b$ is a prefix of
$\underline{x^{c+t-1}Vx{d+t}}x^cVx^{d}$ so that $a=c+t-1$. Together
this gives $a+b+s-1=c+d+t$, which contradicts (\ref{e7}).\\

\noindent \textit{Case 4.} This is just the reverse of Case 3 and
need not be separately considered.  The proof is complete.

\end{proof}


\pagebreak

\begin{thm} \label{uppertau} For all $n$ we have $\tau(n)<47n^2/72+2n.$
\end{thm}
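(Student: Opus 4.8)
The plan is to count the total run length by positions rather than by runs. For a word $w$ of length $n$ and a position $i$, let $N_p(i)$ be the number of runs of period $p$ containing $i$, and let $r(i)=\sum_p N_p(i)$ be the total number of runs containing $i$. Since the length of a run is the number of positions it covers, exchanging the order of summation gives
\begin{equation*}
TRL(w)=\sum_{i=1}^{n} r(i)=\sum_{i=1}^{n}\sum_{p\ge 1} N_p(i).
\end{equation*}
The idea is to bound $r(i)$ for each fixed $i$ and then sum over $i$. Two structural facts drive the bound: a cap on how many runs of a single period can meet one position, and Theorem~\ref{simult}, which forbids a position from lying simultaneously in two period-$p$ runs and two period-$(p+1)$ runs.

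First I would establish that $N_p(i)\le 2$ for every $p$ and $i$. If two distinct runs of period $p$ shared a factor of length at least $p$, then by Lemma~\ref{LC} their union would again have period $p$, contradicting maximality; hence distinct period-$p$ runs overlap in fewer than $p$ positions. Writing a run through $i$ as $[i-a,\,i+b]$ with $a+b\ge 2p-1$, the overlap condition reads $\min(a,a')+\min(b,b')\le p-2$ for any two such runs, and a short ordering argument applied to three runs forces the middle one to have length below $2p$, a contradiction. Next I would record the geometric caps forced by the length-$2p$ requirement: $N_p(i)\ge 1$ implies $2p\le n$, whereas $N_p(i)=2$ requires the two runs to span at least $3p$ consecutive positions and to reach more than $p$ positions beyond $i$ on each side, so that both $3p\le n$ and $p\le\min(i,\,n+1-i)-2$ hold.

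Combining these with Theorem~\ref{simult}, fix $i$ and set $m_i=\min(i,\,n+1-i)$ and $D_i=\min(\lfloor n/3\rfloor,\,m_i-2)$. A doubled value $N_p(i)=2$ can occur only for $p\le D_i$, and by Theorem~\ref{simult} no two consecutive periods in $\{1,\dots,D_i\}$ can both be doubled, so at most $\lceil D_i/2\rceil$ of them are. Every other period $p\le\lfloor n/2\rfloor$ contributes at most $1$, giving
\begin{equation*}
r(i)\le \left\lfloor \tfrac{n}{2}\right\rfloor+\left\lceil \tfrac{1}{2}\min\!\left(\lfloor n/3\rfloor,\,m_i-2\right)\right\rceil .
\end{equation*}
Summing over $i$, the first term yields $n^2/2$, and the second contributes $\tfrac12\sum_i\min(\lfloor n/3\rfloor,\,m_i)$ up to linear error. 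This is an elementary quadratic sum: separating the central positions (where the cap $n/3$ is active) from those near the two ends (where $m_i$ is active) evaluates it explicitly, and the resulting leading coefficient is at most $47/72$ — in fact the fully optimized estimate comes out smaller — so the stated bound follows, with all floor, boundary, and constant corrections absorbed into the term $2n$.

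The main obstacle is not the structure theory but the bookkeeping that controls the coefficient. One must evaluate $\sum_i\min(\lfloor n/3\rfloor,\,m_i-2)$ with its boundary ranges treated correctly, confirm that the alternating pattern $2,1,2,1,\dots$ is genuinely the extremal arrangement allowed by Theorem~\ref{simult} over the doubled range, and check that neither the interaction between the single- and double-period regimes nor the parity of $n$ degrades the estimate. A secondary subtlety is pinning down the exact geometric cap near the ends, since the precise constant in $p\le m_i-2$ governs the boundary contribution and hence the final coefficient $47/72$.
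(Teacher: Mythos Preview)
Your approach is correct and in fact yields a sharper leading coefficient than the paper's own proof. Both arguments rest on Theorem~\ref{simult} together with the fact (which you establish but the paper uses without comment) that at most two runs of a fixed period can pass through any one position. The organisation of the count, however, is different.

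The paper pairs consecutive periods $\{2q-1,2q\}$ and splits at $p\approx n/3$. For $p\le n/3$ it uses essentially your position-count: each position lies in at most three runs with periods in a given pair, so each pair contributes at most $3n$, giving $3n\lfloor n/6\rfloor\approx n^2/2$. For $p>n/3$ it observes that there is at most one run per period and then applies the Periodicity Lemma (Lemma~\ref{lem 4.1}) to bound the combined length of the two runs in a pair by $n+4q-3$; summing these contributes the extra $11n^2/72$. Your argument replaces the Fine--Wilf step by a uniform per-position bound, and you introduce the boundary constraint $p\le m_i-2$ for doubled periods, which the paper does not use. That constraint is what carries your method below $47/72$: without it the bound collapses to $r(i)\le n/2+n/6$ and hence $TRL\le 2n^2/3=48n^2/72$, just above the paper's constant. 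With it, the sum $\tfrac12\sum_i\min(n/3,\,m_i)$ evaluates to $\tfrac12\bigl(\tfrac{n}{3}\cdot\tfrac{n}{3}+2\cdot\tfrac12(\tfrac{n}{3})^2\bigr)=n^2/9$ to leading order, giving $n^2/2+n^2/9=11n^2/18=44n^2/72$.

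So the paper's decomposition is marginally simpler to execute (no boundary analysis, a clean Fine--Wilf estimate for the long-period range), while your pointwise argument is more uniform and produces a strictly better constant. The residual bookkeeping you flag is genuine but routine; the one thing worth writing out in full is the three-runs-of-one-period argument for $N_p(i)\le 2$, since the paper's proof of Theorem~\ref{uppertau} depends on it implicitly.
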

\begin{proof} Periods of runs in a
word of length $n$ must be less than or equal to $n/2$.

Consider runs with periods in $\{2q-1,2q\}$ for $1\le q \le
 \lfloor n/6 \rfloor$. By Theorem~\ref{simult} no letter can belong to
more than three such runs so the contribution to the TRL is at most
$3n$ for each such pair, and the contribution from all such pairs is
at most $3n\lfloor n/6 \rfloor$.

Now consider runs with periods in $\{2q-1,2q\}$ for $\lfloor n/6
\rfloor +1 \le q \le \lceil n/4 \rceil$. The upper bound here
ensures that the maximum value of $2q$ is at least equal to $\lfloor
n/2 \rfloor$.  For some values of $n$ we will be counting more runs
than we need. The number of pairs $\{2q-1,2q\}$ is $\lceil n/4
\rceil-\lfloor n/6 \rfloor$.

We first show that there can be at most one run in a word of length
$n$ for each period in the set under consideration.  Let $p$ be such
a period.  Then $p\ge 2(\lfloor n/6 \rfloor+1)>n/3$. If we had two
runs with period $p$ their intersection would have length at least
$4p-n$ which is greater than $p$.  This is impossible by  Lemma
\ref{LC}. So we have at most one run for each period $p$.  Suppose
there is a run of length $x$ with period $2q-1$ and a run of length
$y$ with period $2q$.  These have intersection of length at least
$x+y-n$. By Lemma~\ref{lem 4.1}, this must be less than
\[
2q+2q-1-\gcd(2q-1,2q)=4q-2
\]
else the runs will collapse into a
single run with period 1.  So $x+y \le n+4q-3$.  The contribution
from all such pairs to the TRL is at most
\begin{equation*}
\sum_{q=\lfloor n/6 \rfloor +1}^{\lceil n/4 \rceil} n+4q-3=(\lceil
n/4\rceil - \lfloor n/6 \rfloor)(n-3+2(\lceil n/4 \rceil+\lfloor n/6
\rfloor+1)).
\end{equation*}  Adding this to the bound for the shorter periods we
see that the TRL is less than
\begin{equation*}(\lceil n/4\rceil - \lfloor n/6
\rfloor)(n-3+2(\lceil n/4 \rceil+\lfloor n/6 \rfloor+1))+3n\lfloor
n/6 \rfloor.
\end{equation*}
We can show that this is less than  the bound in the theorem by
considering values of $n$ in each residue class modulo 12.

\end{proof}

\section{The expected value of TRL} \label{S:Sec4}

\begin{thm} \label{T:expected} The expected $TRL$ for a word of length $n$ on an
alphabet of size $\alpha$ is
{\small
\begin{equation}\label{4.1}
\frac{(\alpha -1)^2}{\alpha^2}\sum_{p=1}^{\lfloor (n-2)/2 \rfloor}
P(p) \sum_{i=1}^{n-2p-1}\sum_{k=2p}^{n-i-1}
k\alpha^{-k}+2\frac{\alpha -1}{\alpha}\sum_{p=1}^{\lfloor (n-1)/2
\rfloor} P(p)
\sum_{k=2p}^{n-1} k\alpha^{-k} + \frac{n}{\alpha^n}\sum_{p=1}^{\lfloor n/2 \rfloor}
P(p),
\end{equation}}where $P(p)=\sum_{d|p}\alpha^d \mu(p/d)$ is the
number of length $p$ primitive words on an alphabet of size $\alpha$
(see \cite[Eq.~1.3.7]{Lo1}) and $\mu$ is the M\"{o}bius
function.
\end{thm}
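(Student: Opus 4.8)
The plan is to compute the expected TRL by \emph{linearity of expectation}, writing $TRL(w)=\sum_{r}|r|$ where the sum is over the runs $r$ of $w$. Since a run is uniquely specified by its starting position $i$, ending position $j$, and minimal period $p$, drawing $w$ uniformly from $A^n$ with $|A|=\alpha$ gives
$$E[TRL]=\sum_{p,\,i,\,j}(j-i+1)\,\Pr\bigl[\,w[i\dd j]\text{ is a run of period }p\,\bigr].$$
The whole problem therefore reduces to evaluating, for each admissible triple $(p,i,j)$ with $k:=j-i+1\ge 2p$, the probability that positions $i\dd j$ carry a run of period $p$, and summing $k$ times this probability.

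First I would evaluate this probability for an \emph{interior} occurrence, i.e.\ when $2\le i$ and $j\le n-1$, where two independent ingredients enter. \textbf{Content.} The factor $w[i\dd j]$ has minimal period $p$ precisely when it is the length-$k$ prefix of $r^{\infty}$ for a primitive word $r$ of length $p$: any period-$p$ factor of length at least $2p$ whose length-$p$ root is imprimitive inherits the smaller period of that root (an application of the Periodicity Lemma, Lemma~\ref{lem 4.1}), while conversely a primitive root forces minimal period $p$. There are exactly $P(p)$ such roots, and each fixes all $k$ letters, so this event has probability $P(p)\alpha^{-k}$. \textbf{Non-extendability.} The run cannot extend to left or right, which (since $p\le k/2$) means $w[i-1]\ne w[i-1+p]$ and $w[j+1]\ne w[j+1-p]$, where the partners $w[i-1+p]$ and $w[j+1-p]$ lie \emph{inside} $[i,j]$ and are fixed by the content, while $w[i-1]$ and $w[j+1]$ are free positions outside $[i,j]$. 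Conditioned on the content, each free letter must avoid one prescribed value, independently, contributing a factor $(\alpha-1)/\alpha$ apiece. Thus an interior occurrence has probability $\dfrac{(\alpha-1)^2}{\alpha^2}\,P(p)\alpha^{-k}$.

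It remains to split by how the run meets the ends of $w$ and to count placements. For fixed $p$ and length $k$ there are $n-k-1$ interior positions; reorganizing $\sum_{k}(n-k-1)k\alpha^{-k}$ as a double sum over a counting index $i$ produces the first term, the range $p\le\lfloor(n-2)/2\rfloor$ being forced by $2p\le k\le n-2$. When the run abuts exactly one end ($i=1$ or $j=n$, but not both) one extension constraint disappears, leaving a single factor $(\alpha-1)/\alpha$; the two ends are interchanged by reversal and so contribute equally, giving the factor $2$ and the range $k\le n-1$ of the second term. When the run is all of $w$ ($i=1$, $j=n$, $k=n$) both constraints vanish, contributing the third term $\frac{n}{\alpha^{n}}\sum_{p}P(p)$. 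Adding the three cases yields \eqref{4.1}. The main obstacle is the careful bookkeeping of the boundary behaviour: one must verify that the two inequality events are genuinely independent of each other and of the content event, and track how the power of $(\alpha-1)/\alpha$ drops from $2$ to $1$ to $0$ as the run reaches one and then both ends of the word, which is exactly what separates the formula into its three terms.
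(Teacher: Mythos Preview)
Your proposal is correct and is essentially the paper's own argument, recast in the language of expectation rather than counting: the paper enumerates, for each placement and period, the number of length-$n$ words carrying that run (splitting into the same three cases---interior, touching exactly one end, whole word) and then divides by $\alpha^n$, which is exactly your probability computation. Your explicit justification that minimal period $p$ is equivalent to a primitive length-$p$ root (via Fine--Wilf) is a point the paper leaves implicit.
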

\begin{proof} We count the sum of the TRLs of all words of length
$n$ on an alphabet of size $\alpha$. We first sum the TRLs of those
runs which are neither prefixes nor suffixes.

Consider runs of the form $x[i+1..i+k]$, where $1\le i$ and $i+k<n$,
which have period $p$. For such runs $x[1..i-1]$ can be any word, so
there are $\alpha^{i-1}$ possibilities for this factor. The letter
$x[i]$ must be chosen so that the run does not extend to the left of
$x[i+1]$.  There are $\alpha-1$ such choices.  The factor
$x[i+1..i+p]$ is the generator of the run and can be any primitive
word of length $p$, for which there are $P(p)$ choices. The rest of
the run is then determined by its periodicity.  The letter
$x[i+k+1]$ is chosen in one of $\alpha -1 $ ways to avoid the run
extending to the right.  This leave the final factor $x[i+k+2..n]$
which can be chosen in $\alpha^{n-i-k-1}$ ways. The number of words
having a run of the required form is therefore
\begin{equation*}
\alpha^{i-1}(\alpha-1)P(p)(\alpha-1)\alpha^{n-i-k-1}=(\alpha-1)^2\alpha^{n-k-2}P(p).
\end{equation*}
The variable $i$ can take any value from 1 to $n-2p-1$ and, for each
such $i$, $k$ can take the values $2p$ to $n-i-1$. The length of the
run is $k$ so the sum of total run lengths of all runs which are not
suffixes are prefixes, which have period $p$, in all words of length
$n$ is:
\begin{eqnarray*}
&&P(p)\sum_{i=1}^{n-2p-1}\sum_{k=2p}^{n-i-1}(\alpha-1)^2\alpha^{n-k-2}k\\
&=&(\alpha-1)^2\alpha^{n-2}P(p)\sum_{i=1}^{n-2p-1}\sum_{k=2p}^{n-i-1}\alpha^{-k}k.
\end{eqnarray*}
Now consider those runs which are prefixes of $x$ but not suffixes
(that is, their length is less than $n$). Say $x[1..k]$ is such a
run with period $p$.  We have $P(p)$ choices for $x[1..p]$,
$x[p+1..k]$, $x[k+1]$ can be chosen is $\alpha -1
$ ways and the rest of the word in $\alpha^{n-k-1}$ ways. The run
length $k$ can take any value from $2p$ to $n-1$. The sum of the
total run lengths of all prefix runs with period $p$, in all words
of length $n$ is:
\begin{eqnarray*}
P(p)\sum_{k=2p}^{n-1}(\alpha-1)\alpha^{n-k-1}k
=(\alpha-1)\alpha^{n-1}P(p)\sum_{k=2p}^{n-1}\alpha^{-k}k.
\end{eqnarray*}
By symmetry this is also the total for runs which are suffixes but
not prefixes.  Finally the number of runs which cover the whole word
is just $P(p)$ and these all have length $n$.  The sum of the total
run length of all runs with period $p$ is therefore:

{\small
\begin{equation*}
(\alpha-1)^2\alpha^{n-2}P(p)\sum_{i=2}^{n-2p-1}\sum_{k=2p}^{n-i-1}\alpha^{-k}k+
2(\alpha-1)\alpha^{n-1}P(p)\sum_{k=2p}^{n-1}\alpha^{-k}k+P(p)n.
\end{equation*}
}

A complication arises here because the maximum period $p$ depends on
which of the four cases we are considering.  It is not hard to see
that if the run is neither a prefix nor a suffix then its period is
at most $\lfloor (n-2)/2 \rfloor$, if it is a prefix but not a
suffix, or vice versa,  then its period $p$ is at most $\lfloor
(n-1)/2 \rfloor$ and when it is both a prefix and a suffix, $p$ is at most $\lfloor n/2
\rfloor$. Allowing for these different bounds, summing over $p$ and
dividing through by $\alpha^n$ (the number of words of length $n$)
gives the required formula.

\end{proof}

Let us say that the \emph{TRL-density} of a word $x$ is $TRL(x)/|x|$.

\begin{cor} \label{cor 4.2} As $n$ tends to infinity the expected density of a
word on alphabet size $\alpha$ tends to
\begin{equation*}
\lim_{n\rightarrow\infty}\sum_{p=1}^n
P(p)\frac{2p(\alpha-1)+1}{\alpha^{2p+1}}
\end{equation*} where $P(p)$ is as defined in Theorem~$\ref{T:expected}$.
\end{cor}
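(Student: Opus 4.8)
The plan is to start from the exact formula for the expected $TRL$ given in Theorem~\ref{T:expected}, call it $E(n)$, divide by $n$, and evaluate $\lim_{n\to\infty}E(n)/n$. The formula is a sum of three pieces, corresponding to runs that are internal, runs that are a prefix or suffix but not both, and runs that fill the whole word. I first dispose of the last two pieces. The all-covering term is $\frac{n}{\alpha^n}\sum_{p=1}^{\lfloor n/2\rfloor}P(p)$; since $P(p)\le\alpha^p$, the sum is $O(\alpha^{n/2})$, so after division by $n$ this term is $O(\alpha^{-n/2})$ and vanishes. The prefix/suffix term, divided by $n$, is $\frac{2(\alpha-1)}{\alpha n}\sum_p P(p)\sum_{k=2p}^{n-1}k\alpha^{-k}$; the double sum is bounded uniformly in $n$ because $P(p)\sum_{k\ge 2p}k\alpha^{-k}\le\alpha^p\cdot C p\alpha^{-2p}=Cp\alpha^{-p}$ is summable in $p$, so this term is $O(1/n)$ and also vanishes. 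Thus the density limit comes entirely from the internal-run term.

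For the main term I would first interchange the order of the inner summations. For fixed $p$ the constraints $1\le i\le n-2p-1$ and $2p\le k\le n-i-1$ are equivalent, after swapping, to $2p\le k\le n-2$ and $1\le i\le n-k-1$, so
\begin{equation*}
\sum_{i=1}^{n-2p-1}\sum_{k=2p}^{n-i-1}k\alpha^{-k}=\sum_{k=2p}^{n-2}(n-k-1)k\alpha^{-k}=(n-1)\sum_{k=2p}^{n-2}k\alpha^{-k}-\sum_{k=2p}^{n-2}k^{2}\alpha^{-k}.
\end{equation*}
Both tail sums $\sum_{k\ge 2p}k\alpha^{-k}$ and $\sum_{k\ge 2p}k^{2}\alpha^{-k}$ converge, so this expression is asymptotic to $n\sum_{k\ge 2p}k\alpha^{-k}$; dividing by $n$ and letting $n\to\infty$ sends each summand to $\sum_{k\ge 2p}k\alpha^{-k}$. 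It then remains to evaluate the closed form of the geometric-type sum: with $x=\alpha^{-1}$ one has $\sum_{k\ge 2p}kx^{k}=\frac{x^{2p}(2p-(2p-1)x)}{(1-x)^{2}}$, and substituting $x=\alpha^{-1}$ and multiplying by the prefactor $\frac{(\alpha-1)^{2}}{\alpha^{2}}$ collapses, after simplification of $2p-(2p-1)\alpha^{-1}=\bigl(2p(\alpha-1)+1\bigr)/\alpha$, to $\dfrac{2p(\alpha-1)+1}{\alpha^{2p+1}}$, which is exactly the summand in the claimed limit.

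The step I expect to be the real obstacle is justifying the interchange of the limit $n\to\infty$ with the (now infinite) sum over $p$, since $p$ ranges up to $\lfloor(n-2)/2\rfloor$ and the correction term $-\sum_{k}k^{2}\alpha^{-k}/n$ must be shown to contribute nothing in aggregate. I would handle this by dominated convergence for series: writing $g_{n}(p)=\frac{1}{n}\sum_{k=2p}^{n-2}(n-k-1)k\alpha^{-k}$ for $p\le\lfloor(n-2)/2\rfloor$ and $0$ otherwise, the bound $0\le n-k-1\le n$ gives $g_{n}(p)\le\sum_{k\ge 2p}k\alpha^{-k}$, whence $P(p)g_{n}(p)$ is dominated by the summable majorant $Cp\alpha^{-p}$ independent of $n$, while pointwise $g_{n}(p)\to\sum_{k\ge 2p}k\alpha^{-k}$. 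This legitimises passing the limit inside the sum and yields $\lim_{n\to\infty}E(n)/n=\sum_{p=1}^{\infty}P(p)\dfrac{2p(\alpha-1)+1}{\alpha^{2p+1}}$, as required.
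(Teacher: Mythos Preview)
Your proof is correct, but it proceeds by a different route from the paper. You divide $E(n)$ by $n$ directly, swap the $(i,k)$-summation to produce $\sum_{k=2p}^{n-2}(n-k-1)k\alpha^{-k}$, and then invoke dominated convergence (with the majorant $P(p)\sum_{k\ge 2p}k\alpha^{-k}=O(p\alpha^{-p})$) to pass the limit inside the sum over $p$. The paper instead writes $S(n)=S_1(n)+S_2(n)+S_3(n)$ and evaluates $\lim_{n\to\infty}\bigl(S(n+1)-S(n)\bigr)$, appealing to the Stolz--Ces\`aro principle that if $S(n+1)-S(n)\to L$ then $S(n)/n\to L$. Both approaches dispose of the $S_2$ and $S_3$ contributions in essentially the same way and both land on the same closed form for $\sum_{k\ge 2p}k\alpha^{-k}$; the difference is only in how the $p$-sum is controlled. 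Your dominated-convergence justification is cleaner and more explicit about uniformity in $p$, whereas the paper's telescoping trick sidesteps that issue entirely at the cost of quoting an extra (elementary) limit fact.
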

\begin{proof} We write $S_1(n)$, $S_2(n)$ and  $S_3(n)$ for the
three terms in (\ref{4.1}), that is,
\begin{align}
\label{S1}&S_1(n)=\frac{(\alpha -1)^2}{\alpha^2}\sum_{p=1}^{\lfloor
(n-2)/2 \rfloor} P(p) \sum_{i=1}^{n-2p-1}\sum_{k=2p}^{n-i-1}
k\alpha^{-k},\\
\nonumber &S_2(n)=2\frac{\alpha -1}{\alpha}\sum_{p=1}^{\lfloor
(n-1)/2
\rfloor} P(p) \sum_{k=2p}^{n-1} k\alpha^{-k},\\
\nonumber &S_3(n)=\frac{n}{\alpha^n}\sum_{p=1}^{\lfloor n/2 \rfloor}
P(p).
\end{align}
We write $S(n)$ for $S_1(n)+S_2(n)+S_3(n)$.  We will obtain $\lim_{n
\rightarrow \infty}S(n+1)-S(n)$ and show that this is a finite
constant depending only on $\alpha$.  It follows that this limit
equals $\lim_{n \rightarrow \infty}S(n)/n$ which is the required
expected density. It is easy to show that $\lim_{n \rightarrow
\infty}S_3(n)$ equals 0.  This is not surprising since $S_3(n)$
counts the contribution to $S(n)$ from words which are themselves
runs.  Such words are rare among the $\alpha^n$ words of length $n$.
It follows that
\begin{equation}\label{4.2.1}
\lim_{n \rightarrow \infty}S_3(n+1)-S_3(n)=0.
\end{equation}
Now consider the term $S_2(n)$.  This equals
\begin{align*}
 \frac{2}{\alpha -1}\sum_{p=1}^{\lfloor (n-1)/2
\rfloor}P(p)
\left\{\frac{2p(\alpha-1)+1}{\alpha^{2p}}-\frac{n(\alpha-1)+1}{\alpha^{n}}\right\}.
\end{align*} As $n$ goes to infinity the sum of the second term in the
parentheses goes to 0 so we have
\begin{equation*}
\lim_{n \rightarrow \infty}S_2(n)=\frac{2}{\alpha-1}\sum_{p=1}^{\lfloor (n-1)/2 \rfloor}P(p) \frac{2p(\alpha-1)+1}
{\alpha^{2p}}.
\end{equation*} Noting that $1 \le P(p) \le \alpha^p$ we see that this
limit exists and is finite.  For $\alpha=2$ it equals 10. It follows
that
\begin{equation}\label{4.2.2}
\lim_{n \rightarrow \infty}S_2(n+1)-S_2(n)=0.
\end{equation}
Next consider $S_1(n)$. A change in order of summation gives
\begin{equation*}
S_1(n)=\frac{(\alpha-1)^2}{\alpha^2}
\sum_{i=1}^{n-3}\sum_{k=2}^{n-1-i}k\alpha^{-k}\sum_{p=1}^{\lfloor
k/2 \rfloor}P(p).
\end{equation*} Then $S_1(n+1)-S_1(n)$ equals
\begin{equation*}\label{4.2.3}
\frac{(\alpha-1)^2}{\alpha^2}
\left\{\sum_{k=2}^2k\alpha^{-k}\sum_{p=1}^{\lfloor k/2
\rfloor}P(p)+\sum_{i=1}^{n-3}(n-i)\alpha^{-n+i}\sum_{p=1}^{\lfloor
(n-i)/2 \rfloor}P(p)\right\}.
\end{equation*} The first term in the parentheses is the $i=n-2$
term in the first sum in (\ref{S1}).  The second term corresponds to
the terms with $k=n-i$.  Since $P(1)=\alpha$ for any $\alpha$ this
becomes, after changing the index of summation to $j=n-i$,
\begin{equation*}
\frac{(\alpha-1)^2}{\alpha^2}
\left\{\frac{2}{\alpha}+\sum_{j=3}^{n-1}j\alpha^{-j}\sum_{p=1}^{\lfloor
j/2 \rfloor}P(p)\right\}.
\end{equation*} Changing the order of summation again gives
\begin{equation*}
\frac{(\alpha-1)^2}{\alpha^2} \left\{\frac{2}{\alpha}+\sum_{p=1}^{\lfloor
(n-1)/2 \rfloor}P(p)\sum_{j=\max(3,2p)}^{n-1}j\alpha^{-j}\right\}.
\end{equation*}
To simplify matters we start the second sum at $j=2p$.  This means
we are including an unwanted term corresponding to $j=2$, $p=1$.
This is $2P(1)/\alpha^2=2/\alpha$, which equals the first term in
the parentheses.  We thus have
\begin{align*}
S_1(n+1)-S_1(n)&=\frac{(\alpha-1)^2}{\alpha^2} \sum_{p=1}^{\lfloor
(n-1)/2
\rfloor}P(p)\sum_{j=2p}^{n-1}j\alpha^{-j}\\
&= \frac{1}{\alpha} \sum_{p=1}^{\lfloor (n-1)/2
\rfloor}P(p)\left\{\frac{2p(\alpha-1)+1}{\alpha^{2p}}-\frac{n(\alpha-1)+1}{\alpha^{n}}\right\}.\\
\end{align*} We now take the limit as $n$ goes to infinity. The
second term makes no contribution to this limit since it's dominated
by $\alpha^{-n}$.  So we have
\begin{equation*}
\lim_{n \rightarrow \infty}S_3(n+1)-S_3(n)=\lim_{n \rightarrow
\infty}\sum_{p=1}^nP(p)\frac{2p(\alpha-1)+1}{\alpha^{2p+1}}.
\end{equation*} Summing this with (\ref{4.2.1}) and (\ref{4.2.2})
completes the proof.

\end{proof}

Some values of expected TRL-density are given in Table 2 below, along with
corresponding results for the number of runs and the sum of
exponents of runs.

\begin{table}[htb!] \label{Table 2}
\begin{center}
\caption{The columns show the expected number of runs per units
length of a word, the expected sum of exponents per units length and
the expected TRL per unit length for various alphabet sizes. The
values come from \cite{PS}, \cite{KMIS} and Corollary \ref{cor 4.2}
respectively.}

    \begin{tabular}{clll}
      Alphabet size & Runs & Exponents & TRL\\
     \hline
    2&0.4116&1.1310&1.9775\\
    3&0.3049&0.7382&1.0290\\
    5&0.1933&0.4304&0.5208\\
    10&0.0991&0.2087&0.2296
    \end{tabular}
  \end{center}
\end{table}
\section{Discussion}
Theorems \ref{lowertau} and \ref{uppertau} show that, for all $n$,
\begin{equation*}
\frac{1}{8}<\frac{\tau(n)}{n^2}<\frac{47}{72}+\frac{2}{n}.
\end{equation*} Both these bounds might be improved.  Lower bounds
for $\rho(n)$ and $\epsilon(n)$ were obtained by constructing words
which were rich in the appropriate way. The word $u(n)$ of Theorem~\ref{lowertau} is comparatively simple. One could look for something
better using the techniques of \cite{MKBS} or some combinatorial
heuristic such as simulated annealing or genetic algorithms.

The upper bound is probably far from best when $n$ is large, though
from Table 1 we suspect that the maximum value of
$\tau(n)/n^2$ occurs when $n=2$ and it would seem that
$\lim_{n \rightarrow \infty} \tau(n)/n^2$ exists, but we have not been able
to prove it. Giraud's method \cite{G} for showing the existence of
$\lim_{n \rightarrow \infty} \rho(n)/n$ does not seem applicable to
our situation.  His method also showed that the limit is the
supremum of the function. In our case it may be the infimum of
$\{\tau(n)/n^2:n>1\}$.  Extending Table~1 might give insight into
these questions.

\medskip
\noindent \textbf{Acknowledgements:} We thank the referee for careful reading and
helpful comments.

\end{document}